\newcommand{\norm}[1]{{\left\|{#1}\right\|}}    
   \newcommand{\scal}[1]{{\left\langle{#1}\right\rangle}}
    \newcommand{\bz}{\overline{z}}
\numberwithin{equation}{section}  \makeatletter\@addtoreset{equation}{section}
\newtheorem {theorem}{Theorem}[section]
      \newtheorem {definition}[theorem]{Definition}
\newtheorem {corollary}[theorem]{Corollary}     \newtheorem {remark}[theorem]{Remark}   
       \newtheorem {proposition}[theorem]{Proposition}
\newcommand{\C}{\mathbb C} \newcommand{\R}{\mathbb R} 
\newcommand{\Hq}{\mathbb H}  
\newcommand{\BR}{\mathbb{B}}
\newcommand{\SHyperBTransR}{\mathcal{A}^\alpha_{slice}}
\newcommand{\BergSliceR}{A^{2,\alpha}_{slice}(\BR )}
\begin{document}
\title[]{Bargmann's versus of the quaternionic fractional Hankel transform}

\author{Abdelatif Elkachkouri} 
\author{Allal Ghanmi}
\address{Analysis, P.D.E. $\&$ Spectral Geometry, Lab M.I.A.-S.I., CeReMAR,	Department of Mathematics, P.O. Box 1014,  Faculty of Sciences,	Mohammed V University in Rabat, Morocco}
\email{elkachkouri.abdelatif@gmail.com} 
\email{allal.ghanmi@um5.ac.ma}

\author{Ali Hafoud}
\address{(A.H.) Centre Régional des Métiers de l'Education et de la Formation\newline de kenitra, Morocco} 
\email{hafoudaliali@gmail.com}

\subjclass[2010]{Primary 30G35} 
\keywords{Fractional Fourier transform;  Fractional Hankel transform; Slice hyperholomorphic Bergman space; Second Bargmann transform; Laguerre polynomials; Bessel functions}

\begin{abstract}
We investigate the quaternionic extension of the fractional Fourier transform on the real half-line leading to fractional Hankel transform. This will be handled \`a la Bargmann by means of hyperholomorphic second Bargmann transform for the slice Bergman space of second kind. Basic properties are derived including inversion formula and Plancherel identity. 
\end{abstract}
\maketitle
%
%
%
%
%
%

\large 

\section{Introduction}
 The fractional Fourier transform (FrFT), which is special generalization of the Fourier integral transform, is a powerful tool in many fields of research including mathematics, physics and engineering sciences  \cite{Almeida1994,OzaktasZalevskyKutay2001, KilbasSrivastavaTrujilo2006}.
Its introduction goes back to 1929 when was considered implicitly in Wiener's work \cite{Wiener1929},  discussing the extension of certain results of H. Weyl  and leading later to
Fourier developments of fractional order. Mainly, Wiener sets out to find a one--parameter family of unitary integral operators 
$$ \mathcal{F}_\alpha \varphi (x) := \int_{-\infty}^{+\infty} K_\alpha(x,v) \varphi(v) dv $$
on $L^2(\R)$, for which the $n$--th Hermite function $h_n(x)=H_n(x)e^{-x^2/2}$ is a eigenfunction with $e^{in\alpha}$ as corresponding eigenvalue.
The explicit Wiener formula for the kernel function $K_\alpha$ is a limiting case of Mehler's formula for the Hermite functions. This fact was rediscovered sixty years later in quantum mechanics by Namias \cite{Namias1980}, and showed earlier by H\"ormander \cite{Hormander}.

	Another and elegant way to define the standard FrFT is due to Bargmann 
	\cite{Bargmann1961} (see also \cite{BenahmadiG2019a,GhZine2019,Toft2017}) 
		 and lies on the classical Segal--Bargmann transform $\mathcal{B}$. Indeed, the transform
$ \mathcal{R}_\theta  := \mathcal{B}^{-1} \circ T_\theta \circ \mathcal{B}$, with $\Gamma_\theta f(z) := f(\theta z)$, defines a unitary homeomorphism transform on $L^2(\mathbb{R}^d)$ and satisfies $$\mathcal{R}_\theta h_n(x) = e^{in\alpha} h_n(x).$$
Apparently, except Bargmann's work \cite{Bargmann1961} for FrFT on $L^2(\R^d)$ in connection with the Fock--Bargmann space, there is no general abstract approach in the literature for constructing fractional  transform associated to given invertible integral transform $ \mathcal{S}_{X,Y}:\mathcal{H}_X 
\longrightarrow \mathcal{H}_Y$
 on an arbitrary infinite separable functional Hilbert space $\mathcal{H}_X=L^2(X;\omega_Xd\lambda)$. 
In absence of such general formalism, the authors in \cite{CeleghiniGadelladelOlmo2018} have recently tried to extend, in a partial way, the Fourier fractional formalism to the generalized Laguerre functions by means of their relation to Hermite polynomials.

In the present paper, we provide such abstract formalism \`a la Bargmann. Namely, we deal with integral transforms of the form
$\mathcal{S}_{X,Y}^{-1} \circ T_\theta \circ \mathcal{S}_{X,Y},$
 associated to given special invertible integral transform 
$$ \mathcal{S}_{X,Y} \varphi(y) = \int_X R(x,y) \varphi(x) \omega_X(x) d\lambda(x)$$ 
and given appropriate action of a group $G$.
We show that the performed fractional integral transform inherits numerous properties from the ones of $\mathcal{S}_{X,Y}$.  
The explicit computation shows that the kernel function of this integral transform can be expressed explicitly in terms of the kernel function $R(x,y)$. 
As concrete application, we deal with a special quaternionic fractional Fourier transform (QFrFT) acting on the right quaternionic Hilbert space  $$L^{2,\alpha}_{\Hq}(\R^+)=L^{2}_{\Hq}\left( \mathbb{R}^{+}, x^{\alpha}e^{-x} dx\right)  ,\quad \alpha>0,$$
and associated to the second Bargmann transform for hyperholomorphic Bergman space of second kind \cite{1WAG}.
More precisely, we consider the family of (left) integral transforms 
\begin{equation}\label{IntTrans} 
\mathcal{L}_\theta^\alpha \varphi(y) :=   \int_{0}^\infty  K_\theta^\alpha(x,y) \varphi(x)  dx ,
\end{equation}
whose kernel function can be shown to be given in terms of
the modified Bessel function $I_\alpha$, verifying $ \mathcal{L}_\theta^\alpha(\varphi^{\alpha}_{n}(x))=\theta^{n} \varphi^{\alpha}_{n}(x)$ and leading in particular to a variational definition of  the well-known fractional Hankel transform \cite{Namias1980b,Kerr1991}.  
We also prove that $\mathcal{L}_\theta^\alpha$ is continuous, interpolates continuously the identity operator to the Fourier-Bessel (Hankel) transform and satisfies the index law (semi-group property)
$\mathcal{L}_\theta^\alpha\circ\mathcal{L}^\theta_\eta
=\mathcal{L}^\alpha_{\theta\psi} $, so that the inversion formula reads simply $\left( \mathcal{L}_\theta^\alpha\right) ^{-1}=\mathcal{L}_{\theta^{-1}}^\alpha$. 
The considered family of QFrFT for $L^{2,\alpha}_{\Hq}(\R^+)$ 
appears embedded in a strongly continuous one-parameter
group of unitary operators when $|\theta|=1$.  
The exposition of these ideas in the quaternionic setting add some technical difficulties which we overcome using tools from the theory of slice regular functions.

The paper is organized as follows. In Section 2, we present an abstract formalism for constructing quaternionic fractional integral transforms by means of eigenvalue equation involving orthogonal basis of certain quaternionic Hilbert space.
Section 3 is devoted to the reconstruction of quaternionic fractional Hankel transform  for $L^{2,\alpha}_{\Hq}(\R^+)$ by Bargmann versus, and show how to derive in a simple way their basic properties such as the Plancherel and inversion formulas. 

\section{Preliminaries}
In this section, we review the two classical ways of constructing FrFT
that we extend in a natural way to any arbitrary infinite functional left quaternionic separable Hilbert space $\mathcal{H}_{X}$ on given set $X$. We provide then \`a la Namias a concrete example giving rise to the fractional Hankel transform for the quaternionic left Hilbert space $L^{2,\alpha}_{\Hq}(\R^+)$. 
 
\subsection{Abstract formalism for fractional integral transform}
Let $\mathcal{H}_{X}$ and $\mathcal{H}_{Y}$ be two arbitrary infinite functional left quaternionic separable Hilbert spaces with
orthonoromal bases $\varphi_{n}$ and $\psi_{n}$ on $X$ and $Y$, respectively. The corresponding inner scalar products are given by
$$ \scal{\varphi,\phi}_{\mathcal{H}_{X}} = \int_X \overline{\varphi}(x) \phi(x) \omega_{X}(x) dx$$ 
and 
$$ \scal{\Psi,\Phi}_{\mathcal{H}_{Y}} = \int_Y \overline{\Psi}(y) \Phi(y) \omega_{Y}(y) dy,$$
 respectively, for some weight functions $\omega_{X}$ and $\omega_{Y}$. 
Associated to the data $(X,\mathcal{H}_{X},\varphi_n)$ and $(Y,\mathcal{H}_{Y},\psi_{n})$, we consider  the integral transform
$T_{XY}:\mathcal{H}_{X}\longrightarrow\mathcal{H}_{Y}$ of the form $$T_{XY}(\varphi)(y)=\int_{X} \overline{R(x,y)} \varphi(x) \omega_{X}(x)dx.$$
 We assume that $T_{XY}$ is well defined on $\mathcal{H}_{X}$ such that $T_{XY}(\varphi_{n})=\psi_{n}$. 
 This is equivalent to say that kernel function $R(x,y)$ on  $X\times Y$ can be expanded as  
$$R(x,y)=\sum^{\infty}_{n=0}\varphi_{n}(x)\overline{\psi_{n}(y)}$$
whenever the series in the right-hand side is uniformly and absolutely convergent.
Subsequently, $T_{XY}$ is an invertible integral kernel transform, whose inverse is given by
$$T_{XY}^{-1}\psi(x)=\int_{Y} R(x,y) \psi(y)\omega_{Y}(y)dy$$
for $x \in X$ and $\psi \in\mathcal{H}_{Y}$.
Now, we need to special action of some group $G$ on $Y$ that we will extend to $\mathcal{H}_{Y}$ by considering  $\Gamma:G\times\mathcal{H}_{Y}\longrightarrow\mathcal{H}_{Y}$; $ \Gamma(g,\psi)=\Gamma_{g} (\psi)$, so that the corresponding diagrams 
$$
\def\commutatif{\ar@{}[rd]|{\circlearrowleft}}
\xymatrix{ 
	\mathcal{H}_{X} \ar[r]^{T_{XY}} \ar[d]_{\mathcal{F}_g} \commutatif & \mathcal{H}_{Y} \ar[d]^{\Gamma_g}  \commutatif &\ar[l]_{\Gamma} G \times \mathcal{H}_{Y} \ar[d]^{\Gamma_g}\\
	\mathcal{H}_{X}   & \mathcal{H}_{Y} \ar[l]^{T_{XY}^{-1}}   & G \times \mathcal{H}_{Y}  \ar[l]^{\Gamma} }
$$
becomes commutative, which means that $\Gamma$ satisfies  
$$\Gamma (g,\psi)(y)=\Gamma_{g} (\psi)(y)= \psi(\Gamma_{g}.y); \quad y\in Y, \psi\in \mathcal{H}_{Y}.$$ 
We then perform the fractional transform
$$\mathcal{F}_{rg}=T_{XY}^{-1}\circ\Gamma_g \circ T_{XY}; \quad g\in G.$$
For every $\psi \in\mathcal{H}_{Y}$, we have
\begin{align*}
\mathcal{F}_{rg}(\varphi)(x)
&=\int_{y\in Y} R(x,y) \Gamma_{g} (T_{XY}(\varphi))(y)\omega_{Y}(y)dy
\\
&=\int_{y\in Y} R(x,y) \left(  \int_{x'\in X} \overline{R(x',\Gamma_{g}(y) )}  \varphi(x')\omega_{X}(x')dx'\right) \omega_{Y}(y)dy
\\&\stackrel{Fubini}{=} \int_{x'\in X}\widetilde{R_{g}}(x',x)\varphi(x')\omega_{X}(x')dx',
\end{align*}
where $R_{g}(x',x)$ stands for
$$\widetilde{R_{g}}(x',x)= \scal{ R(x',\Gamma_{g}(y) ),R(x,y) }_{\mathcal{H}_{Y}}.$$
An expansion  of $\widetilde{R_{g}}(z,x)$, at least formally, is the following 
\begin{align}
\widetilde{R_{g}}(x',x) 
&=\sum^{\infty}_{m=0}\sum^{\infty}_{n=0} \varphi_{n}(x)\langle \psi_{n} ,\Gamma_{g}\psi_{m} \rangle_{\mathcal{H}_{Y}} \overline{\varphi_{m}(x')}\nonumber
\\& =\sum^{\infty}_{n=0}\varphi_{n}(x)\chi_{n}(g) \, \overline{\varphi_{n}(x')} =: R_{g}(x',x). \label{series}
\end{align}
The last equality follows under the additional assumption  that 
$$\Gamma_{g}\psi_{m}(y) = \psi_{m}(\Gamma_{g}(y)) =\chi_{m}(g)\psi_{m}(y).$$ 
According to the above discussion, we reformulate the following definitions.

\begin{definition}\label{Def1FrFT}
	If the series in the right-hand side of \eqref{series} 
	converges absolutely and uniformly to $R_{g}(x',x)$, then
	$$ \mathcal{F}_{rg} (\varphi )(x) := 
	\int_{x'\in X} R_{g}(x',x)\varphi(x')\omega_{X}(x')dx'    $$
		defines a like-fractional Fourier transform for the data $(\mathcal{H}_{X},\varphi_n,\chi_n)$. 
\end{definition}

\begin{definition}\label{Def2FrFT} 
	We call fractional Fourier transform associated to $T_{XY}$ and $\Gamma$ the integral transform
	$$
	 \widetilde{\mathcal{F}_{rg}} (\varphi )(x)  = T_{XY}^{-1}\Gamma_{g}  T_{XY}  (\varphi )(x)  =  \int_{x'\in X} \widetilde{R_{g}}(x',x)\varphi(x')\omega_{X}(x')dx'  $$
	with  
	\begin{align}\label{KernelInt}
	\widetilde{R_{g}}(x',x) = \scal{R(x',\Gamma_{g}(y) ),R(x,y)}_{\mathcal{H}_{Y}}.
	\end{align}
\end{definition}

\begin{remark}
	We have $\mathcal{F}_{rg}(\varphi_{n})=\varphi_{n}\chi_{n}(g)$. This gives an integral representation for $\varphi_{n}$.
\end{remark}

\subsection{\textbf{Quaternionic fractional Hankel transform (\`a la Namias)}}

We consider  the right quaternionic Hilbert space  $L^{2,\alpha}_{\Hq}(\R^+)$; $\alpha >0$,
of all  quaternionic-valued functions on the half real line $\R^+$ that are square integrable with respect to the inner product 
$$\scal{\varphi,\psi}_{\alpha}=\int_{\mathbb{R}^{+}}\overline{\varphi(x)}\psi(x)
 x^{\alpha}e^{-x}dx 
. $$
We denote by $\norm{\cdot}_{\alpha}$ the associated norm.
A complete orthonormal system in $L^{2,\alpha}_{\Hq}(\R^+)$ is given by the functions 
\begin{equation} \label{basisLaguerre}
\varphi_{n}(x) =\left( \frac{n!}{\Gamma(\alpha +n+1)}\right)^{1/2} L ^{(\alpha)}_{n}(x),
\end{equation}
where $L^{(\alpha)}_{n}(x)$ denotes the generalized Laguerre polynomials 
\begin{align}\label{LaguerrePoly}
L^{(\alpha)}_{n}(x) 
= \frac{x^{-\alpha }e^{x}}{n!}\frac{d^{n}}{dx^{n}}\left(x^{n+\alpha } e^{-x}\right).
\end{align}

Accordingly,  the series function in Definition \ref{Def1FrFT} reduces further to  Hille--H+ardy identity \cite[(6.2.25) p. 288]{AndrewsAskeyRoy1999}
\begin{align}
R_\theta^\alpha(x,y) &= \sum_{n=0}^{+\infty} \frac{n! }{\Gamma(\alpha +n+1)}  \theta^n   L^{(\alpha)}_{n}(x) L^{(\alpha)}_{n}(y) \nonumber
\\&  =
\frac{1}{1-\theta}
\left(\frac{1}{\theta x y} \right)^{\alpha/2}
\exp\left(-\frac{\theta (x+y)}{1-\theta}\right) I_\alpha\left( \frac{2\sqrt{\theta}}{1-\theta} \sqrt{xy}\right) \label{Kernel-Hill} 
\end{align}
valid for $|\theta[<1$ and nonnegative integer $\alpha$,  where $I_\alpha (\xi)$ denotes the modified Bessel function \cite[p.199]{AndrewsAskeyRoy1999}
$$ I_\alpha (\xi) =  \left(  \frac{\xi}{2}\right)^{2n+\alpha} \sum_{n=0}^{\infty} \frac{1}{n! \Gamma(\alpha+n+1)} \left(  \frac{\xi}{2}\right)^{2n}  .$$
Thus, we can rewrite the kernel function 
$K_\theta^\alpha(x,y) = x^\alpha e^{-x}  R_\theta^\alpha(x,y)$  as
\begin{align}\label{KernelBessel} 
K_\theta^\alpha(x,y) &=
\frac{1}{1-\theta} \left(\frac{x}{\theta y} \right)^{\alpha/2}
\exp\left(-\frac{x+\theta y}{1-\theta}\right) 
I_\alpha\left( \frac{2\sqrt{\theta}}{1-\theta} \sqrt{xy}\right) ,
\end{align}
so that the corresponding integral operator is well--defined on $L^{2,\alpha}_{\Hq}(\R^+)$  by 
\begin{align} \label{FrHT}
\mathcal{L}_\theta^\alpha(\varphi)(y)=\int_{0}^{+\infty}K^\alpha_{\theta}(x,y) \varphi(x)dx.
\end{align}
Such transform is closely connected to the fractional Hankel transform \cite{Namias1980b,Kerr1991}.
The Laguerre polynomial $\varphi^{\alpha}_{n}(x)$ in \eqref{basisLaguerre} is (left) eigenfunction of $\mathcal{L}_\theta^\alpha$ with $\theta^{n}$ as corresponding (right) eigenvalue, 
$$ \mathcal{L}_\theta^\alpha(\varphi^{\alpha}_{n}(x))= \varphi^{\alpha}_{n}(x)\theta^{n} .$$ This readily follows from the definition of $\mathcal{L}_\theta^\alpha$.
Moreover, we assert

\begin{proposition}
	For $|\theta|<1$, the integral transform $\mathcal{L}_\theta^\alpha$ defines a continuous $k$--contraction 
	from $L^{2,\alpha}_{\Hq}(\R^+) $ into itself with $k\leq (1-|\theta|^2)^{-1/2}$.
\end{proposition}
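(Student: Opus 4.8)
The plan is to control $\mathcal{L}_\theta^\alpha$ by a Hilbert--Schmidt type estimate, which already yields the announced constant. Using the factorization $K_\theta^\alpha(x,y)=x^\alpha e^{-x}R_\theta^\alpha(x,y)$ recorded in \eqref{KernelBessel}, I would first rewrite \eqref{FrHT} as
$$\mathcal{L}_\theta^\alpha(\varphi)(y)=\int_{0}^{+\infty}R_\theta^\alpha(x,y)\,\varphi(x)\, x^\alpha e^{-x}\,dx .$$
Since the modulus on $\Hq$ is multiplicative ($\abs{pq}=\abs{p}\,\abs{q}$), a pointwise Cauchy--Schwarz inequality in $L^{2,\alpha}_{\Hq}(\R^+)$ gives, for each fixed $y>0$,
$$\abs{\mathcal{L}_\theta^\alpha(\varphi)(y)}\leq \int_{0}^{+\infty}\abs{R_\theta^\alpha(x,y)}\,\abs{\varphi(x)}\,x^\alpha e^{-x}\,dx\leq \norm{R_\theta^\alpha(\cdot,y)}_{\alpha}\,\norm{\varphi}_{\alpha} .$$
Squaring and integrating against $y^\alpha e^{-y}\,dy$ yields
$$\norm{\mathcal{L}_\theta^\alpha(\varphi)}_{\alpha}^{2}\leq \norm{\varphi}_{\alpha}^{2}\int_{0}^{+\infty}\norm{R_\theta^\alpha(\cdot,y)}_{\alpha}^{2}\,y^\alpha e^{-y}\,dy ,$$
so the statement reduces to showing that this last integral --- which is the squared Hilbert--Schmidt norm of $\mathcal{L}_\theta^\alpha$ --- equals $(1-\abs{\theta}^{2})^{-1}$ whenever $\abs{\theta}<1$.

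To evaluate it I would proceed in two stages. Fix $y>0$: by the Hille--Hardy identity \eqref{Kernel-Hill}, $R_\theta^\alpha(\cdot,y)$ is the Fourier--Laguerre series $\sum_{n\geq 0}\varphi_{n}\,\varphi_{n}(y)\,\theta^{n}$ in the orthonormal basis \eqref{basisLaguerre}, and since $\sum_{n\geq 0}\abs{\varphi_{n}(y)}^{2}\abs{\theta}^{2n}<\infty$ (the Laguerre functions grow only polynomially in $n$ while $\abs{\theta}<1$), this series converges in $L^{2,\alpha}_{\Hq}(\R^+)$; by Parseval,
$$\norm{R_\theta^\alpha(\cdot,y)}_{\alpha}^{2}=\sum_{n\geq 0}\abs{\varphi_{n}(y)}^{2}\abs{\theta}^{2n} .$$
Integrating in $y$ against $y^\alpha e^{-y}\,dy$, exchanging sum and integral by monotone convergence, and using $\norm{\varphi_{n}}_{\alpha}=1$ gives $\int_{0}^{+\infty}\norm{R_\theta^\alpha(\cdot,y)}_{\alpha}^{2}\,y^\alpha e^{-y}\,dy=\sum_{n\geq 0}\abs{\theta}^{2n}=(1-\abs{\theta}^{2})^{-1}$. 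Together with the first step this gives $\norm{\mathcal{L}_\theta^\alpha(\varphi)}_{\alpha}\leq (1-\abs{\theta}^{2})^{-1/2}\norm{\varphi}_{\alpha}$, so $\mathcal{L}_\theta^\alpha$ maps $L^{2,\alpha}_{\Hq}(\R^+)$ into itself, is bounded --- hence continuous, being (right) linear --- and is a $k$--contraction with $k\leq (1-\abs{\theta}^{2})^{-1/2}$.

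I expect the only genuinely delicate points to be: (a) identifying the $L^{2,\alpha}$-limit of the Fourier--Laguerre series of $R_\theta^\alpha(\cdot,y)$ with the closed Bessel expression in \eqref{KernelBessel}, which one handles by extracting an a.e.-convergent subsequence and comparing with the pointwise Hille--Hardy identity; and (b) the elementary growth bound in $n$ for $\abs{\varphi_{n}(y)}=(n!/\Gamma(\alpha+n+1))^{1/2}\,\abs{L_{n}^{(\alpha)}(y)}$ at fixed $y$, needed to make $\sum_{n}\abs{\varphi_{n}(y)}^{2}\abs{\theta}^{2n}$ finite. The quaternionic setting adds essentially nothing, since every estimate above involves only the real-valued modulus. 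Finally, I note that the eigenrelation $\mathcal{L}_\theta^\alpha(\varphi_{n})=\varphi_{n}\theta^{n}$ would in fact give the sharper bound $\norm{\mathcal{L}_\theta^\alpha}\leq 1$, but only once boundedness is known; the Hilbert--Schmidt route above is self-contained and already yields the stated estimate.
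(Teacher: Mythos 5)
Your argument is correct, but it is not the paper's argument. The paper proves the bound spectrally: it expands $f=\sum_n\varphi^{\alpha}_{n}c_n$ in the orthonormal basis \eqref{basisLaguerre}, invokes the eigenrelation $\mathcal{L}_\theta^\alpha(\varphi^{\alpha}_{n})=\varphi^{\alpha}_{n}\theta^{n}$ to get $\mathcal{L}_\theta^\alpha f=\sum_n\varphi^{\alpha}_{n}\theta^{n}c_n$, and then bounds $\sum_n\abs{\theta}^{2n}\abs{c_n}^{2}\leq\bigl(\sum_n\abs{\theta}^{2n}\bigr)\bigl(\sum_n\abs{c_n}^{2}\bigr)=(1-\abs{\theta}^{2})^{-1}\norm{f}^{2}$. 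You instead bound the Hilbert--Schmidt norm of the kernel $R_\theta^\alpha$ with respect to the measure $x^{\alpha}e^{-x}dx$ and evaluate it by Parseval in the $x$-variable, landing on the same constant $(1-\abs{\theta}^{2})^{-1/2}$. Each route has something to recommend it. The paper's computation is shorter, and (as you observe) the same expansion with the trivial bound $\abs{\theta}^{2n}\leq 1$ would give the sharper $k\leq 1$; but it silently interchanges $\mathcal{L}_\theta^\alpha$ with the $L^{2}$-convergent sum $\sum_n\varphi^{\alpha}_{n}c_n$, which presupposes some continuity of the operator --- essentially the thing being proved --- unless one justifies the interchange separately by a dominated-convergence argument on the integral. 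Your Hilbert--Schmidt estimate is self-contained on that point, shows in passing that $\mathcal{L}_\theta^\alpha$ is Hilbert--Schmidt (hence compact) for $\abs{\theta}<1$, and explains where the constant $(1-\abs{\theta}^{2})^{-1/2}$ really comes from: it is exactly the Hilbert--Schmidt norm. The two delicate points you flag --- matching the $L^{2,\alpha}$-sum of the Laguerre series of $R_\theta^\alpha(\cdot,y)$ with the closed Bessel expression \eqref{KernelBessel}, and the polynomial-in-$n$ growth of $\abs{\varphi^{\alpha}_{n}(y)}$ at fixed $y$ (Fej\'er's asymptotics give $\abs{\varphi^{\alpha}_{n}(y)}^{2}=O(n^{-1/2})$) --- are genuine but routine, and your treatment of the quaternionic modulus is sound since every estimate only uses $\abs{pq}=\abs{p}\abs{q}$ and real-valued Cauchy--Schwarz.
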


\begin{proof} 
	Since $(\varphi^{\alpha}_{n})_n$ in \eqref{basisLaguerre} is a complete orthonormal system in $L^{2,\alpha}_{\Hq}(\R^+)$, we can expand any $f\in L^{2,\alpha}_{\Hq}(\R^+) $ as $\displaystyle f(x)= \sum_{n=0}^{+\infty} \varphi^{\alpha}_{n} c_n$ for some $c_n\in\Hq$.
	Hence, using the fact that
	$\mathcal{L}_\theta^\alpha(\varphi^{\alpha}_{n})(x)
	= \varphi^{\alpha}_{n}(x) \theta^n ,     $
	we get 
	\begin{align}
	\mathcal{L}_\theta^\alpha (f)(x)= \sum_{n=0}^{+\infty}  \varphi^{\alpha}_{n}(x) \theta^n c_n.
	\end{align}
	Using the orthogonality of $\varphi^{\alpha}_{n}$, we obtain 		
	\begin{align*}
	\norm{\mathcal{L}_\theta^\alpha (\varphi)}^2 
	&= \sum_{n=0}^{+\infty}  |\theta|^{2n} |c_n|^{2}
\\	&\leq \left( \sum_{n=0}^{+\infty}  |\theta|^{2n} \right) \left( \sum_{n=0}^{+\infty} |c_n|^{2n}\right) 
\\	&\leq \left(\frac{1}{1-|\theta|^2} \right) \norm{\varphi}^2
	\end{align*}
	which requires $|\theta|<1$. 
\end{proof}
 
\section{\textbf{The QFrFT for $L^{2,\alpha}_{\Hq}(\R^+)$}}

In view of the explicit expression of the kernel function in \eqref{KernelBessel}, we see that we can consider the limit case of the Hille--Hardy formula which corresponds to $|\theta|=1$ with $\theta \ne 1$. 
We show below that this can recovered by the formalism presented in Definition \ref{Def2FrFT} and specified for $L^{2,\alpha}_{\Hq}(\R^+)$, so that for $\theta= 1$ the considered transform reduces further to the identity operator 
of $L^{2,\alpha}_{\Hq}(\R^+)$. To this end, we begin by recalling that the hyperholomorphic second Bargmann integral transform \cite{1WAG}, defined by
\begin{equation}\label{IntTransf}
[\SHyperBTransR \varphi](q)=  \frac{1}{\sqrt{\pi \Gamma(\alpha)}\left( 1-q\right)^{\alpha +1}}  \int_{0}^{+\infty}   \exp\left(\frac{tq}{q-1}\right) \varphi(t) t^\alpha  e^{-t} dt,
\end{equation}
is the quaternionic analogue of the complex second Bargmann transform introduced by Bargmann himself in \cite[p.203]{Bargmann1961}.
It establishes a unitary isometric from $L^{2,\alpha}_{\Hq}(\R^+)$ onto the slice hyperholomorphic Bergman space (of second kind)  
on the unit ball $\mathbb{B}$ in $\mathbb{R}^{4}$,  
\begin{align}\label{HypBspace}
A^{2,\alpha}_{slice}(\mathbb{B}) :=   \mathcal{SR}(\mathbb{B}) \cap L^{2,\alpha}(\mathbb{B}_I),
\end{align} 
where $I\in \Hq$ with $I^2=-1$; $\mathbb{B}_I = \mathbb{B}\cap \C_I$  and
$$ L^{2,\alpha}(\mathbb{B}_I):= 
\left\{ f: \mathbb{B} \longrightarrow \Hq;\ \int_{\mathbb{B}_I} |f(z)|^{2}d\lambda^{\alpha}_I(z) <+ \infty\right\}.$$
Here $d\lambda^{\alpha}_I$ denotes the Bergman measure on the unit disc $\mathbb{B}_I$ in $\mathbb{R}^{2}$ given by
$$
d\lambda^{\alpha}_I(z=x+Iy) = \left(1-x^2-y^2 \right)^{\alpha-1} dxdy .$$ 
Sequentially, we have 
$$ A^{2,\alpha}_{slice}(\mathbb{B}) = \left\{ f(q) = \sum_{n=0}^\infty q^n c_n; \, c_n\in \Hq, \, \sum_{n=0}^\infty \frac{n!}{\Gamma(\alpha+n+1)} |c_n|^2 <+\infty\right \},$$
so that the restriction to $\mathbb{B}_I$ is the classical Bergman space on the unit disc of $\C_I$. It should be mentioned here that the scaler product defining $L^{2,\alpha}(\mathbb{B}_I)$, 
$$ \scal{f,g}_I := \int_{\mathbb{B}_I} \overline{f(z)} g(z)  d\lambda^{\alpha}_I(z),$$ 
is independent of $I$ when acting on $A^{2,\alpha}_{slice}(\mathbb{B}) \times A^{2,\alpha}_{slice}(\mathbb{B})$, i.e., 
$ \scal{f,g}_I = \scal{f,g}_J$
for any $f,g\in A^{2,\alpha}_{slice}(\mathbb{B})$ and any $I,J$ such that $I^2=J^2=-1$. 

The inverse of the second Bargmann transform  $\SHyperBTransR$ is well--defined from $\BergSliceR$ onto $ L^{2,\alpha}_{\Hq}(\R^+)$, and is given by \cite{1WAG}
\begin{align}\label{Inverse2BTq}
[\SHyperBTransR]^{-1} f(t) =  \frac{1}{\sqrt{\pi \Gamma(\alpha)}}  \int_{B_I}
\exp \left( \frac{t\overline{z}}{\overline{z}-1}\right) \frac{(1-|z|^2)^{\alpha-1}}{\left(1-z\right)^{\alpha+1}}   f|_{B_I}(z)   dxdy.
\end{align}

Notice for instance that the definition of $\BergSliceR$ is based on the classical one on a given disc $\mathbb{B}_I$. This was possible by extending the complex holomorphic functions to the whole $\mathbb{B}$ by the representation formula (see for example \cite{ColomboSabadiniStruppa2016}). While the transform $\SHyperBTransR$ in \eqref{IntTransf} is associated to the kernel function 
\begin{equation}\label{expKernel}
A^\alpha_{slice} (x;q) :=   \frac{1}{\sqrt{\pi \Gamma(\alpha)}\left( 1-q\right)^{\alpha +1}}  \exp\left( \frac{xq}{q - 1}\right)
\end{equation}
on $\R^{+} \times \mathbb{B} $, and obtained as bilinear generating function involving the functions  $(\varphi_{n}^\alpha)_n$ in \eqref{basisLaguerre} and the orthonormal basis of $\BergSliceR$ given by the functions
\begin{equation} \label{onbA}
f_n(q) =  \left(\frac{\Gamma(n+\alpha+1)}{\pi \Gamma(\alpha) n!}\right)^{1/2} q^{n}.
\end{equation} 

Now, by means of $\SHyperBTransR$, its inverse $[\SHyperBTransR]^{-1}$ and the angular unitary operator 
$\Gamma_{\theta}(f)(q)=f(q \theta )$,
we perform the transform 
\begin{align}\label{limitoperator}
\widetilde{\mathcal{L}_\theta^\alpha}:=[\SHyperBTransR]^{-1} \Gamma_{\theta} \SHyperBTransR
\end{align}
on $L^{2,\alpha}_{\Hq}(\R^+)$. Here we consider the
$\Gamma_\theta$--action  $\Gamma_{\theta}q=q \theta$ of $G=U_\Hq(1)$ on $\mathbb{B}$, 
that we extend to the hyperholomorphic Bergman space $\BergSliceR$ by considering 
\begin{align}\label{star}
\Gamma_{\theta}(f)(q)=f_{\star}(q \theta ) := \sum_{n=0}^\infty q^n \theta^n c_n
\end{align}
for given $f(q)= \sum_{n=0}^\infty q^n c_n\in \BergSliceR$. 
The function $q\longmapsto f_{\star}(q\theta )$ is in fact  the slice regularization of $q\longmapsto f(q \theta )$ obtained by making use of the left $\star^L_s$-product for left slice regular functions  $\displaystyle  f (q) = \sum_{n=0}^\infty q^n a_n$ and $\displaystyle  g (q) = \sum_{n=0}^\infty q^n b_n $ on $\Hq$ defined by \cite{GentiliStoppato08}
\begin{equation}\label{starProduct}
(f\star^L_s g) (q) = \sum_{n=0}^\infty q^n \left(\sum_{k=0}^n a_k b_{n-k}\right).
\end{equation}
 In particular, we have
\begin{equation}\label{fntheta} (f_n)_{\star}(q\theta) := f_n(q)\theta^n,
\end{equation}
and therefore we may prove the following.

\begin{proposition}
	For $\theta\in\Hq$ with $|\theta|\leq 1$, the transform
	$\widetilde{\mathcal{L}_\theta^\alpha}$ in \eqref{limitoperator}
	defines a continuous integral transform from $L^{2,\alpha}_{\Hq}(\R^+)$ onto $L^{2,\alpha}_{\Hq}(\R^+)$ with norm not exceed $1$. For $|\theta|=1$, we have 	$$ \scal{\widetilde{\mathcal{L}_\theta^\alpha} \varphi, \widetilde{\mathcal{L}_\theta^\alpha} \psi}=\scal{ \varphi,\psi} .$$
\end{proposition}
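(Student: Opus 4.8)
The plan is to transport everything to the slice hyperholomorphic Bergman space by means of the unitary second Bargmann transform and thereby reduce the statement to the elementary behaviour of the angular operator $\Gamma_\theta$ on $\BergSliceR$. First I would recall from \cite{1WAG} that $\SHyperBTransR\colon L^{2,\alpha}_{\Hq}(\R^+)\to\BergSliceR$ is a unitary isomorphism of right quaternionic Hilbert spaces, with inverse \eqref{Inverse2BTq}, carrying the orthonormal basis $(\varphi_n^\alpha)_n$ of \eqref{basisLaguerre} onto the orthonormal basis $(f_n)_n$ of \eqref{onbA}. Since $\widetilde{\mathcal{L}_\theta^\alpha}=[\SHyperBTransR]^{-1}\Gamma_\theta\SHyperBTransR$, the operator $\widetilde{\mathcal{L}_\theta^\alpha}$ is bounded on $L^{2,\alpha}_{\Hq}(\R^+)$ exactly when $\Gamma_\theta$ is bounded on $\BergSliceR$, with the same operator norm, and $\scal{\widetilde{\mathcal{L}_\theta^\alpha}\varphi,\widetilde{\mathcal{L}_\theta^\alpha}\psi}_\alpha=\scal{\Gamma_\theta(\SHyperBTransR\varphi),\Gamma_\theta(\SHyperBTransR\psi)}_I$; so it suffices to control $\Gamma_\theta$ on $\BergSliceR$.

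Next, for $f=\sum_n f_n a_n\in\BergSliceR$ — so that $\norm{f}^2=\sum_n|a_n|^2<\infty$ by Parseval — the star-regularization \eqref{star} together with \eqref{fntheta} shows that $\Gamma_\theta$ is right $\Hq$-linear and acts by $\Gamma_\theta f=\sum_n f_n\theta^n a_n$. As the quaternionic modulus is multiplicative, $|\theta^n a_n|=|\theta|^n|a_n|$, and hence
\[
\norm{\Gamma_\theta f}^2=\sum_n|\theta|^{2n}|a_n|^2\leq\sum_n|a_n|^2=\norm{f}^2\qquad\text{whenever }|\theta|\leq1 .
\]
In particular the series defining $\Gamma_\theta f$ converges in $\BergSliceR$, so $\Gamma_\theta$ is a well-defined contraction; transporting this back through the first paragraph, $\widetilde{\mathcal{L}_\theta^\alpha}$ maps $L^{2,\alpha}_{\Hq}(\R^+)$ into itself continuously with norm $\leq1$, and, using the unitarity of $\SHyperBTransR$, one gets $\widetilde{\mathcal{L}_\theta^\alpha}\varphi_n^\alpha=\varphi_n^\alpha\theta^n$ and $\widetilde{\mathcal{L}_\theta^\alpha}\big(\sum_n\varphi_n^\alpha a_n\big)=\sum_n\varphi_n^\alpha\theta^n a_n$.

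For the case $|\theta|=1$: then $\theta\overline\theta=\overline\theta\,\theta=1$ is real, hence central in $\Hq$, so $\overline\theta^{\,n}\theta^n=\theta^n\overline\theta^{\,n}=1$ for all $n$ by a one-line induction. Consequently the contraction estimate becomes an equality, so $\Gamma_\theta$ — and thus $\widetilde{\mathcal{L}_\theta^\alpha}$ — is an isometry, and $\Gamma_{\overline\theta}$ is a two-sided inverse for $\Gamma_\theta$, which makes it a surjective isometry. For the bilinear identity I would expand $\varphi=\sum_n\varphi_n^\alpha a_n$, $\psi=\sum_n\varphi_n^\alpha b_n$ and compute, using orthonormality of $(\varphi_n^\alpha)_n$, $\overline{\theta^n a_n}=\overline{a_n}\,\overline\theta^{\,n}$ and $\overline\theta^{\,n}\theta^n=1$,
\[
\scal{\widetilde{\mathcal{L}_\theta^\alpha}\varphi,\widetilde{\mathcal{L}_\theta^\alpha}\psi}_\alpha=\sum_n\overline{\theta^n a_n}\,\theta^n b_n=\sum_n\overline{a_n}\,\overline\theta^{\,n}\theta^n\,b_n=\sum_n\overline{a_n}\,b_n=\scal{\varphi,\psi}_\alpha ,
\]
which is the asserted Plancherel-type identity.

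I expect no analytic obstacle here; the only point requiring care is the quaternionic bookkeeping, namely keeping each factor $\theta^n$ on the correct (right) side — which is precisely what the star-regularization \eqref{star}–\eqref{fntheta} encodes and what makes $\Gamma_\theta$ right $\Hq$-linear — and the identity $\overline\theta^{\,n}\theta^n=|\theta|^{2n}$, which relies on the centrality of $|\theta|^2$. One also uses continuity and right $\Hq$-linearity of $\SHyperBTransR$, $[\SHyperBTransR]^{-1}$ and $\Gamma_\theta$ to interchange these operators with the Fourier series. Finally, for $|\theta|=1$ the phrase ``integral transform'' is to be read as the boundary value, i.e.\ the $L^{2,\alpha}_{\Hq}(\R^+)$-limit of the kernel operators \eqref{FrHT} as $\theta$ is reached radially (the Bessel kernel $K_\theta^\alpha$ no longer being absolutely integrable on $|\theta|=1$), in accordance with Definition \ref{Def2FrFT}; accordingly the surjectivity in the first assertion is genuine exactly for $|\theta|=1$, the range being a proper subspace when $|\theta|<1$.
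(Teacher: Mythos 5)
Your proposal is correct and follows essentially the same route as the paper: conjugate by the unitary second Bargmann transform, reduce to the diagonal action $\Gamma_\theta f_n = f_n\theta^n$ on the orthonormal basis, and use $\overline{\theta}^{\,n}\theta^n=|\theta|^{2n}$ to get the contraction bound and the Plancherel identity. If anything, you are slightly more thorough than the paper (which asserts rather than computes the norm bound $\le 1$), and your closing caveat that surjectivity genuinely holds only for $|\theta|=1$ is a fair correction to the statement as written.
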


\begin{proof} The operator $\widetilde{\mathcal{L}_\theta^\alpha}$ in \eqref{limitoperator} is well--defined from $L^{2,\alpha}_{\Hq}(\R^+)$
	into itself if and only if the action $\Gamma_{\theta}$ leaves the space $\BergSliceR$ invariant, which is clear from the definition of $\Gamma_{\theta}$ given through \eqref{star}. Moreover, using the fact $\SHyperBTransR\varphi^{\alpha}_{n} = f_n$ as well as \eqref{fntheta}, we get 
	\begin{align*}
	\widetilde{\mathcal{L}_\theta^\alpha} (\varphi^{\alpha}_{n}(y))
	&=[\SHyperBTransR]^{-1}\left( f_n (\cdot) \theta^n \right) (y)
	=\varphi^{\alpha}_{n}(y) \theta ^{n}.
	\end{align*}
	In addition, under the  condition that $|\theta|=1$, it is clear that $\Gamma_\theta$  preserves the scalar product in $\BergSliceR$.
	Indeed, for every 
	$f=\sum_{n=0}^\infty f_n   c_n \quad \mbox{and} \quad  g=\sum_{n=0}^\infty f_n d_n \in \BergSliceR,$ we have 
	\begin{align*}
	\scal{ \Gamma_{\theta} f,  \Gamma_{\theta}  g}_{\BergSliceR}
	& 	= \sum_{n,m=0}^\infty	\overline{c_n} 
	\overline{\theta^n} \scal{  f_n ,   f_m}_{\BergSliceR}  \theta^m d_m
	\\&	= \sum_{n=0}^\infty  \overline{c_n}|\theta^n|^2 d_n
	\\&	=  \scal{  f, g}_{\BergSliceR}
	.\end{align*}
	Accordingly, the identity $ \scal{\widetilde{\mathcal{L}_\theta^\alpha} \varphi, \widetilde{\mathcal{L}_\theta^\alpha} \psi}=\scal{ \varphi,\psi}$ 
	follows as composition of operators preserving scaler product. 
\end{proof}

\begin{corollary} 
	If $|\theta|=1$, then the QFrFT $\widetilde{\mathcal{L}_\theta^\alpha}$ defines a unitary transform from $L^{2,\alpha}_{\Hq}(\R^+)$ into $L^{2,\alpha}_{\Hq}(\R^+)$.
\end{corollary}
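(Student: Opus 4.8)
The plan is to deduce the corollary almost immediately from the preceding Proposition. Recall that a bounded right $\Hq$-linear map on a quaternionic Hilbert space is unitary exactly when it is a surjective isometry. The Proposition already furnishes the isometry when $|\theta|=1$, since it gives $\scal{\widetilde{\mathcal{L}_\theta^\alpha}\varphi,\widetilde{\mathcal{L}_\theta^\alpha}\psi}=\scal{\varphi,\psi}$ for all $\varphi,\psi\in L^{2,\alpha}_{\Hq}(\R^+)$; hence the only thing left to establish is surjectivity.

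To prove surjectivity I would proceed as follows. From the proof of the Proposition we have $\widetilde{\mathcal{L}_\theta^\alpha}(\varphi^{\alpha}_{n})=\varphi^{\alpha}_{n}\theta^n$ for every $n$. Since $|\theta|=1$, each $\theta^n$ is a unit quaternion, and using that the inner product $\scal{\cdot,\cdot}_\alpha$ is conjugate-linear in the first argument and right $\Hq$-linear in the second, so that $\scal{\varphi\,a,\psi\,b}=\overline{a}\scal{\varphi,\psi}b$, one checks at once that $(\varphi^{\alpha}_{n}\theta^n)_n$ is again an orthonormal family; it is moreover complete, because each $\theta^n$ is invertible in $\Hq$, so the right $\Hq$-span of $(\varphi^{\alpha}_{n}\theta^n)_n$ coincides with that of the complete system $(\varphi^{\alpha}_{n})_n$. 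Finally, being a linear isometry, $\widetilde{\mathcal{L}_\theta^\alpha}$ has closed range; this range contains every $\varphi^{\alpha}_{n}\theta^n$, hence contains the closed right $\Hq$-span of a complete orthonormal system, i.e. all of $L^{2,\alpha}_{\Hq}(\R^+)$. Thus $\widetilde{\mathcal{L}_\theta^\alpha}$ is onto, which together with the isometry property proves that it is unitary.

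An equivalent and perhaps more transparent route is to exhibit the inverse explicitly: since $|\theta|=1$ forces $\theta^{-1}=\overline{\theta}$, and since $\Gamma_\theta$ acts diagonally on the monomial basis $(f_n)_n$ of $\BergSliceR$ by \eqref{fntheta}, one gets $\Gamma_\theta\circ\Gamma_{\overline{\theta}}=\Gamma_{\overline{\theta}}\circ\Gamma_\theta=\mathrm{Id}_{\BergSliceR}$; conjugating by the unitary isometry $\SHyperBTransR$ then yields $(\widetilde{\mathcal{L}_\theta^\alpha})^{-1}=\widetilde{\mathcal{L}_{\overline{\theta}}^\alpha}$, whence unitarity. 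Either way, no analytic difficulty is involved: the content is entirely in the Proposition and in the diagonal action of $\Gamma_\theta$ on the orthonormal bases. The one point requiring genuine care — and the only place where the quaternionic setting is more delicate than the complex one — is the bookkeeping of the side on which the scalars $\theta^n$ act, which is exactly why one must invoke right $\Hq$-linearity and the identity $\scal{\varphi\,a,\psi\,b}=\overline{a}\scal{\varphi,\psi}b$ rather than simply quoting the classical argument verbatim.
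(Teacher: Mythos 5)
Your proof is correct, and it is in fact more complete than what the paper offers: the paper states this corollary with no proof at all, treating it as an immediate consequence of the preceding Proposition, whose argument only establishes that $\widetilde{\mathcal{L}_\theta^\alpha}$ preserves the inner product. The genuine content you add is the surjectivity step, which the paper leaves implicit (and which its Proposition asserts, via the word ``onto'', without justification). Both of your routes work: the range argument is sound because $(\varphi_n^\alpha\theta^n)_n$ is again a complete orthonormal system (using $\scal{\varphi a,\psi b}=\overline{a}\scal{\varphi,\psi}b$ and the invertibility of $\theta^n$) and an isometry has closed range; and the explicit-inverse route is also sound, the key point being that $\theta$ and $\overline{\theta}$ commute, so $\overline{\theta}^{\,n}\theta^n=(\overline{\theta}\theta)^n=1$ and hence $\Gamma_{\overline{\theta}}\circ\Gamma_\theta=\Gamma_\theta\circ\Gamma_{\overline{\theta}}=\mathrm{Id}$ despite the general failure of $\Gamma_\eta\circ\Gamma_\theta=\Gamma_{\eta\theta}$ for noncommuting quaternions. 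Your closing caution about the side on which the scalars act is exactly the right point to flag in the quaternionic setting.
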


\begin{remark}
	The family of one--parameter transforms $\mathcal{L}_\theta^\alpha$ verifies the semi-group property $\mathcal{L}_\theta^\alpha\circ\mathcal{L}^\alpha_\eta
	=\mathcal{L}^\alpha_{\theta\eta} $, so that its inverse  is $\mathcal{L}_{1/\theta}$ when $\theta\ne 0$. But we do not have  $\mathcal{L}_\theta^\alpha\circ\mathcal{L}^\alpha_\eta
	= \mathcal{L}^\alpha_\eta \circ \mathcal{L}_\theta^\alpha$ in general,
	for lack of commutativity in $\Hq$.   However, 
	$\mathcal{L}_\theta^\alpha\circ\mathcal{L}^\alpha_\eta
	=\mathcal{L}^\alpha_{\theta\eta} = \mathcal{L}^\alpha_\eta \circ \mathcal{L}_\theta^\alpha$
	holds only when $\theta$ and $\psi$ belongs to the same slice $\C_I:=\R+I\R\subset \Hq$; $I^2=-1$. 
\end{remark}

The next result gives the explicit expression of the inverse of $\widetilde{\mathcal{L}_\theta^\alpha}$.  

\begin{proposition}
	For any quaternionic $\theta \ne 0$, the inverse of $\widetilde{\mathcal{L}_\theta^\alpha}$ is given by 
	$$ (\widetilde{\mathcal{L}_\theta^\alpha})^{-1}=\mathcal{A}^{-1}\Gamma_{\theta}^{-1} \mathcal{A}=\widetilde{\mathcal{L}_{\frac{1}{\theta}}}. $$
\end{proposition}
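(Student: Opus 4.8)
The plan is to use that, by its very construction \eqref{limitoperator}, $\widetilde{\mathcal{L}_\theta^\alpha}$ is the conjugate of the elementary angular operator $\Gamma_\theta$ by the \emph{bijective} transform $\mathcal{A}:=\SHyperBTransR$; inverting it thus reduces to inverting $\Gamma_\theta$ on $\BergSliceR$. Writing $\widetilde{\mathcal{L}_\theta^\alpha}=\mathcal{A}^{-1}\Gamma_\theta\mathcal{A}$ and applying the same formula with $\theta$ replaced by $\theta^{-1}=\overline\theta/|\theta|^2$ (legitimate since $\theta\neq 0$) gives $\widetilde{\mathcal{L}_{1/\theta}}=\mathcal{A}^{-1}\Gamma_{1/\theta}\mathcal{A}$. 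Since $\mathcal{A}$ is a norm-preserving isomorphism of $L^{2,\alpha}_{\Hq}(\R^+)$ onto $\BergSliceR$ with inverse \eqref{Inverse2BTq}, it suffices to establish $\Gamma_\theta\circ\Gamma_{1/\theta}=\Gamma_{1/\theta}\circ\Gamma_\theta=\mathrm{id}$ on $\BergSliceR$; then $(\widetilde{\mathcal{L}_\theta^\alpha})^{-1}=(\mathcal{A}^{-1}\Gamma_\theta\mathcal{A})^{-1}=\mathcal{A}^{-1}\Gamma_\theta^{-1}\mathcal{A}=\mathcal{A}^{-1}\Gamma_{1/\theta}\mathcal{A}=\widetilde{\mathcal{L}_{1/\theta}}$, which is precisely the asserted chain of equalities.

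To prove $\Gamma_{1/\theta}\Gamma_\theta=\mathrm{id}$ I would argue on power series. For $f(q)=\sum_{n\ge 0}q^nc_n\in\BergSliceR$, definition \eqref{star} says $\Gamma_\theta f$ has coefficients $\theta^nc_n$, so $(\Gamma_{1/\theta}\Gamma_\theta f)(q)=\sum_{n\ge 0}q^n\,\theta^{-n}(\theta^nc_n)=\sum_{n\ge 0}q^n(\theta^{-n}\theta^n)c_n$. The only spot where the non-commutativity of $\Hq$ could intervene is exactly here, and it is harmless: every quaternion in sight is a power of the \emph{single} element $\theta$, and such powers commute (they lie in a common slice $\C_I$ with $\theta\in\C_I$), whence $\theta^{-n}\theta^n=1$ and $\Gamma_{1/\theta}\Gamma_\theta f=f$; the reversed composition is identical. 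Equivalently one may check this on the orthonormal basis via \eqref{fntheta}, using $(f_n)_\star(q\theta)=f_n(q)\theta^n$ and $(f_n)_\star(q\theta^{-1})=f_n(q)\theta^{-n}$, then propagating to all of $\BergSliceR$ by right $\Hq$-linearity and continuity/closedness.

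The single point I would flag is the domain of definition rather than any real obstacle: the weighted condition $\sum_n\frac{n!}{\Gamma(\alpha+n+1)}|c_n|^2<\infty$ defining $\BergSliceR$ is preserved by $\Gamma_\theta$ only when $|\theta|\le 1$, so for $|\theta|>1$ the operators $\Gamma_\theta$ and $\widetilde{\mathcal{L}_{1/\theta}}$ are merely densely defined; the clean reading is then that $\widetilde{\mathcal{L}_{1/\theta}}$ and $\widetilde{\mathcal{L}_\theta^\alpha}$ are mutually inverse as closed, densely defined operators, $\widetilde{\mathcal{L}_{1/\theta}}$ being the two-sided inverse of $\widetilde{\mathcal{L}_\theta^\alpha}$ on its dense range $\{\sum_n\varphi_n^\alpha d_n:\sum_n|\theta|^{-2n}|d_n|^2<\infty\}$. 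When $|\theta|=1$ one has $\Gamma_{1/\theta}=\Gamma_{\overline\theta}$, a genuine bounded (unitary) inverse of $\Gamma_\theta$, and the formula returns the unitary inverse of the preceding corollary. Apart from this bookkeeping the argument is just the conjugation identity above, so I expect no further difficulty.
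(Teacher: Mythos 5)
Your proof is correct and follows essentially the same route as the paper, which simply invokes the conjugation structure $\widetilde{\mathcal{L}_\theta^\alpha}=\mathcal{A}^{-1}\Gamma_\theta\mathcal{A}$ together with $\Gamma_\theta\circ\Gamma_\eta=\Gamma_{\theta\eta}$ and declares the result immediate. Your added care — observing that the only possible non-commutativity issue, $\theta^{-n}\theta^n$, is harmless because powers of a single quaternion lie in one slice, and flagging that $\Gamma_{1/\theta}$ is only densely defined on $\BergSliceR$ when $|\theta|>1$ — addresses details the paper leaves implicit.
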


\begin{proof} 
It is immediate form the definition of $\widetilde{\mathcal{L}_\theta^\alpha}$ and the fact that $\Gamma_{\theta}\circ\Gamma_{\eta}=\Gamma_{\theta\eta}$. 
\end{proof}

The following result identifies the kernel function given by \eqref{KernelInt},
\begin{align}\label{expKerneltheta1}
\widetilde{R_{\theta}^\alpha}(x,y) :=  \scal{ A^\alpha_{slice} (x;  \Gamma_\theta\cdot)    , A^\alpha_{slice} (y;\cdot)  }_{L^{2,\alpha}(\mathbb{B}_I)}
\end{align}
of the QFrFT  transform 
\begin{align*}
[\widetilde{\mathcal{L}_\theta^\alpha} (\varphi )](y)&= \scal{  \widetilde{R_{\theta}^\alpha}  , \varphi }_{L^{2,\alpha}_{\Hq}(\R^+)} .
\end{align*}

\begin{theorem}
	The kernel function $\widetilde{R_{\theta}^\alpha}(x,y)$ is a left slice regular and coincides with the kernel function of the fractional Hankel transform on the quaternionic unit ball.
	 Moreover, the explicit expression of $\widetilde{\mathcal{L}_\theta^\alpha}$ is given by 
	\begin{align}\label{limitcase}	
\widetilde{\mathcal{L}_\theta^\alpha}  \varphi(y) = 
\frac{e^{\frac{\theta y}{\theta-1}}} {(1-\theta) (\theta y)^{\alpha/2}} 
\int_{0}^\infty  x ^{\alpha/2} I_\alpha\left( \frac{2\sqrt{\theta}}{(1-\theta)} \sqrt{xy}\right)
 e^{\frac{x}{\theta-1}}
\varphi (x)  dx
\end{align}
for any $\theta\in\Hq$ with $|\theta|\leq 1$ and $\theta\ne 1$.	
\end{theorem}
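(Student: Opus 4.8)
The strategy is to compute the kernel $\widetilde{R_\theta^\alpha}$ explicitly by expanding the Bargmann kernel $A^\alpha_{slice}$ in the orthonormal basis $(f_n)_n$ of $\BergSliceR$, and then to read off all three assertions from that expansion. First I would record the generating-function identity
$$A^\alpha_{slice}(x;q) = \sum_{n=0}^{\infty} \varphi_n^\alpha(x)\, f_n(q),$$
which is exactly the Laguerre generating relation $\sum_n L_n^{(\alpha)}(x)\, t^n = (1-t)^{-\alpha-1}\exp(xt/(t-1))$ rewritten through \eqref{basisLaguerre} and \eqref{onbA}; this is the ``bilinear generating function'' remark made after \eqref{onbA}. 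Applying the $\star$-action $\Gamma_\theta$ term by term and invoking \eqref{fntheta} gives $A^\alpha_{slice}(x;\Gamma_\theta q) = \sum_n \varphi_n^\alpha(x)\, f_n(q)\,\theta^n$. Substituting this, together with the expansion of $A^\alpha_{slice}(y;\cdot)$, into \eqref{expKerneltheta1}, and using the orthonormality of $(f_n)_n$, the slice-independence of $\scal{\cdot,\cdot}_I$ on $\BergSliceR$, and the fact that the $\varphi_n^\alpha$ are real-valued, I obtain (with the inner-product convention fixed above)
$$\widetilde{R_\theta^\alpha}(x,y) = \sum_{n=0}^{\infty} \varphi_n^\alpha(x)\,\varphi_n^\alpha(y)\,\theta^n = \sum_{n=0}^{\infty} \frac{n!}{\Gamma(\alpha+n+1)}\, L_n^{(\alpha)}(x)\, L_n^{(\alpha)}(y)\,\theta^n,$$
i.e.\ the Hille--Hardy series \eqref{Kernel-Hill}. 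The interchange of summation and integration is legitimate for $|\theta|<1$ since $|\varphi_n^\alpha(x)\varphi_n^\alpha(y)|$ grows only polynomially in $n$ while $|\theta|^n$ decays geometrically, so dominated convergence applies.

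From this expansion the qualitative claims are immediate. The coefficients $\tfrac{n!}{\Gamma(\alpha+n+1)} L_n^{(\alpha)}(x) L_n^{(\alpha)}(y)$ are real, so $\theta \mapsto \widetilde{R_\theta^\alpha}(x,y)$ is a convergent power series in the quaternionic variable $\theta$ on $\mathbb{B}$ with real coefficients, hence a (slice-preserving) left slice regular function of $\theta$; this is the slice-regularity assertion. Restricted to a complex slice $\C_I \cap \mathbb{B}$ the series is the classical complex Hille--Hardy kernel, and for $\theta$ on the unit circle of $\C_I$ (so $|\theta|=1$, $\theta\ne1$), after the substitution turning $I_\alpha$ into the oscillatory $J_\alpha$, it is precisely the kernel used by Namias and Kerr to define the fractional Hankel transform. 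Thus $\widetilde{R_\theta^\alpha}$ is exactly its slice-regular continuation to the quaternionic unit ball, which is the second assertion.

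For the explicit formula \eqref{limitcase} I would insert the closed form \eqref{Kernel-Hill} into $[\widetilde{\mathcal{L}_\theta^\alpha}\varphi](y) = \scal{\widetilde{R_\theta^\alpha},\varphi}_{L^{2,\alpha}_\Hq(\R^+)}$; this realizes $\widetilde{\mathcal{L}_\theta^\alpha}$ as the integral operator with kernel $K_\theta^\alpha(x,y) = x^\alpha e^{-x} R_\theta^\alpha(x,y)$ given by \eqref{KernelBessel}. Splitting $-\tfrac{x+\theta y}{1-\theta} = \tfrac{x}{\theta-1} + \tfrac{\theta y}{\theta-1}$ in the exponential of \eqref{KernelBessel} and pulling the $x$-independent factor $\dfrac{e^{\theta y/(\theta-1)}}{(1-\theta)(\theta y)^{\alpha/2}}$ to the left of the integral yields \eqref{limitcase}; working on a fixed slice $\C_I$ and then invoking slice regularity takes care of the non-commuting quaternionic scalars and of the passage to arbitrary $\theta\in\mathbb{B}$.

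The delicate point — and the main obstacle — is the endpoint $|\theta|=1$ with $\theta\ne1$, where the Hille--Hardy series diverges pointwise, so $\widetilde{R_\theta^\alpha}$ no longer admits an absolutely convergent kernel representation and one cannot simply pass to the limit inside the sum. There the integral in \eqref{limitcase} nevertheless still converges (for $|\theta|=1$, $\theta\ne1$ one has $\mathrm{Re}\,\tfrac{1}{\theta-1} = -\tfrac12$, so $e^{x/(\theta-1)}$ decays like $e^{-x/2}$ while $I_\alpha$ becomes an oscillating Bessel function), and the earlier Corollary guarantees $\widetilde{\mathcal{L}_\theta^\alpha}$ is unitary. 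I would therefore establish \eqref{limitcase} for $|\theta|=1$ by checking it on the dense span of the $\varphi_n^\alpha$ — where term-by-term integration is justified and reproduces $\varphi_n^\alpha(y)\theta^n$ via the classical Hankel integral of a Laguerre function — and then extend by continuity, using that both $\widetilde{\mathcal{L}_\theta^\alpha}$ and the right-hand integral operator in \eqref{limitcase} are bounded on $L^{2,\alpha}_\Hq(\R^+)$.
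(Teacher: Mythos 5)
Your proposal is correct and follows essentially the same route as the paper: both arguments reduce $\widetilde{R_{\theta}^\alpha}(x,y)$ to the Hille--Hardy series $\sum_n \frac{n!}{\Gamma(n+\alpha+1)}\theta^n L_n^{(\alpha)}(x)L_n^{(\alpha)}(y)$, identify its closed form for $|\theta|<1$, and then extend in $\theta$. Two minor differences are worth noting. First, where you obtain the series by expanding both kernels in the orthonormal basis $(f_n)_n$ and invoking orthonormality, the paper instead writes the inner product as an explicit integral over $\mathbb{B}_{I_\theta}$, interchanges sum and integral via the Laguerre generating function together with Fej\'er's asymptotic formula, and evaluates $\int z^n\bar z^m(1-|z|^2)^{\alpha-1}$ directly; these are the same computation in different clothing, and your justification of the interchange (geometric decay of $|\theta|^n$ against polynomial growth of the Laguerre factors) is the same estimate Fej\'er's formula supplies. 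Second, and more substantively, the boundary case $|\theta|=1$, $\theta\ne 1$: the paper passes to the limit $\varepsilon\theta$, $\varepsilon\to 1^-$, ``at least formally,'' deferring the rigorous justification to an unspecified distributional argument, whereas you verify \eqref{limitcase} on the dense span of the $\varphi_n^\alpha$ and extend by continuity using boundedness of both sides. Your route is the cleaner one, but note that it silently assumes the $L^{2,\alpha}_{\Hq}(\R^+)$-boundedness of the integral operator on the right-hand side of \eqref{limitcase} for $|\theta|=1$; that is essentially the unitarity of the classical fractional Hankel transform and should be cited (Kerr) or proved rather than asserted --- it is exactly the point the paper's formal limit also leaves open. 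With that reference supplied, your argument is complete and, on the endpoint case, more rigorous than the paper's.
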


\begin{proof}
		Notice first that for $\theta=1$ there is nothing to prove since in this case, the operator $\widetilde{\mathcal{L}_\theta^\alpha}$ reduces further to the identity operator  of the Hilbert space $L^{2,\alpha}_{\Hq}(\R^+)$ and 
	the $R_{\theta}^\alpha(x,y)$ in \eqref{expKerneltheta1} is to considered as the Dirac delta function. 
	To identify the closed expression of the kernel  $\widetilde{R_{\theta}^\alpha}(x,y)$, we should notice  that the $\Gamma_\theta$-action reads
	$$ \Gamma_\theta (q\longmapsto A^\alpha_{slice} (x; q) ) 
	=  \left( 1 -  q \theta\right)^{-\alpha-1} \star \exp_{\star}\left( x q \theta  , [q \theta - 1]^{-1} \right) ,$$
	where 
	$$ \exp_{\star}\left( f(q) , g(q)\right)= 
	\sum_{n=0}^\infty \frac{f^{n\star}(q) \star  g^{n\star}(q)}{n!} .$$
	For $\theta$ being a non real quaternionic number, there exists a unique imaginary unit $I_\theta$; $I_\theta^2=-1$, such that $\theta \in \C_{I_\theta}\cap S^3$. 
	By means of \eqref{expKerneltheta1} and the independence of the scaler product $ \scal{f,g}_I$ in $I$ when acting on $A^{2,\alpha}_{slice}(\mathbb{B}) $, we may write  
	\begin{align*}
\widetilde{R_{\theta}^\alpha}(x,y) &:=  \scal{ \Gamma_\theta A^\alpha_{slice} (x; \cdot)   ,  A^\alpha_{slice} (y;\cdot)  }_{L^{2,\alpha}(\mathbb{B}_{I_\theta})} 
\\&= \frac 1{\pi\Gamma(\alpha)} \int_{B_I}  \frac{ \exp \left( \frac{x z \theta}{ z \theta - 1}\right)
		\exp\left( \frac{y \bz}{\bz - 1}\right) }{( 1 -  z \theta)^{\alpha+1} 
		( 1 - \bz )^{\alpha+1}}
	\left(1-\left|z\right|^2\right)^{\alpha-1}d\lambda_I(z)
	\end{align*}
	in view of the explicit expression of the kernel function $A^\alpha_{slice}$ in \eqref{expKernel}. Using the generating function for generalized Laguerre polynomials \cite[p.288]{AndrewsAskeyRoy1999} 
	$$(1-z)^{-\alpha-1}\exp\left(\frac{xz}{z-1}\right)=\sum_{n=0}^{\infty}L^{(\alpha
		)}_{n}\left(x\right)z^{n},$$
	provided $|z|<1$, as well as Fejer's formula  \cite[Theorem 8.22.1, p. 198]{Szego1975},  it is not hard to see that the involved $z$-function series are uniformly convergent on any compact set contained in unit disk. Therefore, direct computation yields
	\begin{align}
	\widetilde{R_{\theta}^\alpha}(x,y) 
		&=\frac 1{\pi\Gamma(\alpha)}\int_{\mathbb{D}}\left( \sum _{n=0}^\infty L^{(\alpha)}_{n}(x) \theta^n z^{n}\right) \left( \sum _{m=0}^\infty L^{(\alpha)}_{m}(y) \bz^{m}\left(1-| z|^{2}\right)  ^{\alpha -1}\right)  d\lambda(z)  \nonumber
	\\	&=\frac 1{\pi\Gamma(\alpha)}\sum_{n=0}^\infty\sum _{m=0}^\infty \theta^n L^{(\alpha)}_{n}(x) L^{(\alpha)}_{m}(y) \int_{\mathbb{D}}   z^{n} \bz^{m}\left(1-| z|^{2}\right)  ^{\alpha -1} d\lambda(z) \nonumber
	\\&=  \sum_{n=0}^\infty  \frac{n!}{\Gamma(n+1+\alpha)}  \theta^n L^{(\alpha)}_{n}(x) L^{(\alpha)}_{n}(y)\label{HH}
	\end{align}
	for $|\theta z| <1$ which holds true when $|\theta| \leq 1$ and $|z|<1$.
	This provides the expansion series of the restriction of $\widetilde{R_{\theta}^\alpha}(x,y)$ to any $\mathbb{B}_I$.
	For $|\theta|<1$, we recognize the Hille--Hardy identity \eqref{Kernel-Hill} for Laguerre polynomials. 
	Thus, we have
	\begin{align} \label{equalitySlice}
		\widetilde{R_{\theta}^\alpha}(x,y)=\frac{1}{(1-\theta) }  \left( \frac{1}{xy\theta}\right)^{\alpha/2}
		I_{\alpha}\left( \frac{2\theta^{1/2}}{1-\theta} \sqrt{xy}\right)\exp \left(- \frac{\theta(x+y)}{1-\theta} \right) 
		\end{align}
		for $|\theta|<1$ and $\theta \notin \R$. 
		 This leads to \eqref{FrHT} by considering the kernel function $
		\widetilde{R_{\theta}^\alpha}(x,y) x^\alpha e^{-x}$.
The right-hand side in \eqref{equalitySlice} is clearly a slice regular 
		function in $\theta\in \mathbb{B}$ for $x,y$ being reals.
	The extension of \eqref{equalitySlice} to the whole unit open ball $\mathbb{B}$	relies on the Identity Principle for left slice regular functions \cite{GentiliStoppato08}, since both 
	sides of \eqref{equalitySlice} are left slice regular and 
	coincide at least on the upper half unit ball.
To conclude, we need only to examine the validity of the closed expression in  the right-hand side of \eqref{equalitySlice} for the expansion of $\widetilde{R_{\theta}^\alpha}(x,y)$ with remains valid when  $|\theta|=1$ with $\theta \ne 1$. 
This can be handled by  fixing $\theta$ such as and let $\varepsilon \in (0,1)$, so that \eqref{equalitySlice} holds true for $|\varepsilon\theta| <1$,
		and next sending $\varepsilon$ to $1^-$, at least formally. This can be rigorously justified making use of test functions  and classical argument from the Schwartz theory of distributions. 
\end{proof}

\begin{remark}
	By taking $\theta=-1$ with $\sqrt{\theta}=i$ in \eqref{limitcase}, we recover the classical Fourier-Bessel transform \cite{Namias1980b,Kerr1991}
	$$ \left( \mathcal{H}_\alpha \psi\right)  (y) :=   
	\int_{0}^\infty  u J_\alpha\left( y u \right)  \psi (u)  du$$
	for $\psi \in L^2(\R^+)$, where $J_\alpha$ is the Bessel function.
	 Indeed, by setting  $\widetilde{\mathcal{L}^\alpha}=\widetilde{\mathcal{L}_{-1}^\alpha}$ and making the change of variable $u^2 =x$ and the function $\psi(u)= x^{\alpha/2} e^{-x/2} \varphi(x)= u^\alpha e^{-u^2/2} \varphi(u^2) $ we get 
		\begin{align*}	
\widetilde{\mathcal{L}^\alpha}  \varphi(y^2) &= 
\frac{e^{y^2/2}} { i^{\alpha} y^{\alpha}} 
\int_{0}^\infty   u ^{\alpha+1} I_\alpha\left( iy u \right) e^{-u^2/2} \varphi (u^2)  du 
= \frac{e^{y^2/2}} { y^{\alpha}} \left( \mathcal{H}_\alpha \psi\right)  (y).
\end{align*} 
The last equality follows since $I_\alpha(x) = i^{-\alpha} J_\alpha(i x)$.
\end{remark}

\begin{remark}
	The considered family of QFrFT on the real half-line appears embedded in a strongly continuous one-parameter
	group of unitary operators the quaternionic context.  
	Moreover, it is continuous and interpolates continuously the identity operator ($\theta=1$) to the Hankel transform \cite[p. 216]{AndrewsAskeyRoy1999} 
	corresponding to $\theta=-1$. 
\end{remark}

\begin{remark}
The considered transform can be used to reintroduce the hyperholomorphic Bergman space $
A^{2,\alpha}_{slice}(\mathbb{B})$ in \eqref{HypBspace} as well as some of their specific generalization in the context of slice regular functions on the unit quaternionic ball by considering the dual transform of $\theta \longmapsto \widetilde{\mathcal{L}_\theta^\alpha}\varphi(y)$, for fixed $y\in (0,+\infty)$. For the limit case of $y=0$, the last transform is nothing than the Bargmann transform in \eqref{IntTransf}. 
\end{remark}

\end{document}